\documentclass[a4paper,11pt]{article}

\usepackage[margin=1in]{geometry}
\usepackage{setspace}
\usepackage{amsmath}
\usepackage{amssymb}
\usepackage{amsthm}
\usepackage{graphicx}
\usepackage{float}
\usepackage{caption}
\usepackage{listings}
\usepackage{tikz}
\usepackage{enumitem}
\usepackage{xcolor}
\usepackage{mathrsfs}

\newtheorem{theorem}{Theorem}[section]

\newtheorem{lemma}[theorem]{Lemma}
\newtheorem{definition}[theorem]{Definition}

\numberwithin{figure}{section}

\providecommand{\keywords}[1]{\textbf{Keywords:} #1}

\usepackage{hyperref}
\hypersetup{
    pdftitle={A Note on the Rainich Problem for SU(2) Gauge},
    pdfauthor={Hanwen Liu}
}

\begin{document}

\title{\texorpdfstring{\textbf{A Note on the Rainich Problem for SU(2) Gauge}}{A Note on the Rainich Problem for SU(2) Gauge}}
\author{Hanwen Liu}
\date{}

\maketitle

\begin{abstract}
We provide a resolution to the non-Abelian Rainich problem. By canonically identifying traceless symmetric $(0,2)$-tensors with Hermitian forms on the vector bundle of chiral 2-forms, we define the internal square roots of a stress-energy tensor. We then prove that the existence of a local $\operatorname{SU}(2)$ Yang-Mills field with prescribed stress-energy tensor $T$ is equivalent to a single differential condition on internal square roots of $T$.
\end{abstract}

\begin{center}
\keywords{Rainich's problem; Yang-Mills theory; General relativity}
\end{center}

\onehalfspacing
\raggedbottom

\section{Introduction and Background}

There has long been profound interest in achieving a geometric unification of fundamental physical forces.
In the context of general relativity and gauge theory, the Rainich problem \cite{Rainich} establishes a natural goal, namely finding an electromagnetic field strength whose stress-energy tensor exactly matches a target spacetime Ricci tensor.
Because the $\operatorname{U}(1)$ structure group of classical electromagnetism imposes severe algebraic restrictions on the admissible Ricci tensors, this classical inverse problem can typically only be solved under exceedingly stringent algebraic conditions on the spacetime metric \cite{Rainich}.

To overcome this inherent structural limitation, Rainich's program inspired various non-Abelian generalizations over the decades. Instead of standard Einstein-Yang-Mills theory, earlier approaches often relied on alternative geometric formulations, such as finding double anti-self-dual solutions within Yang's parallel-displacement gravity \cite{Mielke1981}, or utilizing higher-dimensional Kaluza-Klein and tensor dominance models \cite{Mielke2017}. Within the standard 4-dimensional framework, recent efforts have attempted to dynamically block-diagonalize the stress-energy tensor using local orthogonal planes of symmetry and first-order perturbative methods \cite{Garat2019}. However, finding an exact result within standard general relativity has remained mathematically elusive.

In this note, we resolve this impasse by decoupling the algebraic constraints from the differential geometry. Rather than working directly within the standard metric-variation framework, we utilize the chiral formulation of 4-dimensional Lorentzian geometry. By taking internal square roots of a target stress-energy tensor, we absorb the degrees of freedom that traditionally obstruct the Abelian theory. We demonstrate that the complete Einstein-Yang-Mills constraints for an $\operatorname{SU}(2)$ gauge group can be packaged into a single exact differential equation. This translates the inverse problem of non-Abelian geometrization into a direct integrability condition on the background spacetime. Crucially, because this closed-form equation relies exclusively on the exterior derivatives and linear algebraic commutators of the target geometry, it provides a highly computable, algorithmic criterion for verifying whether a given macroscopic stress-energy tensor originates from a non-Abelian gauge field.

\section{The Main Result}

Throughout this article, for any $\mathfrak{sl}_2(\mathbb{C})$-valued differential $p$-form 
$$
\Omega=\sum_{i=1}^n\omega_i\otimes X_i,
$$
we define its conjugate $\bar{\Omega}$ to be
$$
\sum_{i=1}^n\bar{\omega}_i\otimes \phi(X_i),
$$
where $\bar{\omega}_i$ is the usual complex conjugate of the complex $p$-form $\omega_i$, and $\phi\in\operatorname{Aut}(\mathfrak{sl}_2(\mathbb{C}))$ is the involution of $\mathfrak{sl}_2(\mathbb{C})$ fixing its compact real form $\mathfrak{su}(2)$.

We first establish the algebraic correspondence between the macroscopic stress-energy tensor and the microscopic chiral geometry of the spacetime.

\begin{definition}
For any 3+1 spacetime $(M,g)$, we shall always denote by $\Lambda^+$ the bundle of self-dual 2-forms and by $\Lambda^-$ the bundle of anti-self-dual 2-forms on $(M,g)$. Moreover, we also canonically identify each traceless symmetric $(0,2)$-tensor ${T}$ on $(M,g)$ with a section $\hat{{T}}$ of $\Lambda^+\otimes\Lambda^-$ via the Clebsch-Gordan decomposition. For a traceless symmetric $(0,2)$-tensor ${T}$ on $(M,g)$, an $\mathfrak{sl}_2(\mathbb{C})$-valued self-dual local 2-form $\omega$ on $(M,g)$ satisfying $\hat{{T}}=-\operatorname{tr}(\omega\otimes\bar{\omega})$ is called an internal square root of ${T}$. 

Here and henceforth, the matrix trace $\operatorname{tr}(-)$ is taken in the Lie algebra $\mathfrak{sl}_2(\mathbb{C})$.
\end{definition}

It is readily seen that, for a traceless symmetric $(0,2)$-tensor ${T}$ on a 3+1 spacetime, the induced section $\hat{{T}}$ is an Hermitian form, whose eigenvalues are descirbed in the following lemma.

\begin{lemma}\label{Energy_condition}
For a traceless symmetric $(0,2)$-tensor ${T}$ satisfying the strict dominant energy condition on a 3+1 spacetime $(M,g)$, the induced Hermitian form $\hat{{T}}$ is positive definite, and hence ${T}$ always admits internal square roots.
\end{lemma}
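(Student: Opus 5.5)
We plan to work pointwise, since both positivity of $\hat T$ and the existence of a square root are algebraic statements at each $x\in M$; smoothness in $x$ is handled at the end. Fix $x$ and choose an orthonormal frame $(e_0,e_1,e_2,e_3)$ at $x$. This gives bases of $\Lambda^+_x$ and $\Lambda^-_x$: take $\Sigma_i=e^0\wedge e^i+\tfrac{i}{2}\epsilon_{ijk}e^j\wedge e^k$ for $\Lambda^+$, with the conjugates $\bar\Sigma_i$ spanning $\Lambda^-$. Complex conjugation exchanges $\Lambda^+$ and $\Lambda^-$, so a section of $\Lambda^+\otimes\Lambda^-\cong\Lambda^+\otimes\overline{\Lambda^+}$ is a sesquilinear form on $(\Lambda^+)^*$. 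Reality of $T$ makes it Hermitian.

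In these frames the Clebsch--Gordan identification reads $T_{ab}\leftrightarrow \hat T_{i\bar j}$, with
\[
\hat T_{i\bar j}\propto T_{00}\delta_{ij}+T_{ij}+i\epsilon_{ijk}T_{0k}
\]
up to a fixed positive normalization and sign convention. Here tracelessness was used to eliminate $\sum_k T_{kk}$ in favour of $T_{00}$. We will verify this formula by expanding $\Sigma_i\otimes\bar\Sigma_j$ and contracting with the metric, in the way a Maxwell field $F=\mathrm{Re}(\phi^i\Sigma_i)$ gives $T\propto\phi\otimes\bar\phi$. The Maxwell case also fixes the sign: a null or non-null Maxwell field has $\hat T=\phi\bar\phi^{T}\ge 0$ and satisfies the dominant energy condition.

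Positivity then reduces to a matrix inequality. For $v\in\mathbb C^3$ we get
\[
v^*\hat T v\propto T_{00}|v|^2+v^*Sv+\mathrm{Im}\!\left(\textstyle\sum\epsilon_{ijk}\bar v_iv_jT_{0k}\right),
\]
where $S=(T_{ij})$. Write $v=a+ib$ with $a,b$ real. The last term becomes $2\,P\cdot(a\times b)$ with $P_k=T_{0k}$. Decompose into the real and imaginary parts and use $|a\times b|\le \tfrac12(|a|^2+|b|^2)$. This reduces the claim to showing that the real symmetric quantity is controlled by $T_{00}$, $S$ and $|P|$. This is the main obstacle. It is not enough to know $T_{00}>|P|$; we need the stronger fact that $T(u,u)\ge0$ and $-T(u,\cdot)$ is future causal for all timelike $u$.

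The route we will try first is to use Lorentz invariance. $\mathrm{SO}^+(1,3)$ acts on $\Lambda^+$ through $\mathrm{SL}_2(\mathbb C)\to\mathrm{SO}(3,\mathbb C)$, and positivity of a Hermitian form is invariant under this action. So we may boost to a frame adapted to $T$, using the Segre/Hawking--Ellis classification of traceless $T$ obeying the DEC. Type I with eigenvalues $\rho,p_1,p_2,p_3$, where $\sum p_i=\rho$ and $|p_i|\le\rho$, gives $P=0$ and a diagonal $\hat T$ with entries proportional to $\rho+p_i$ (or the analogous combination). These are $\ge0$, and they are $>0$ under the strict DEC. The null type II case is handled by a direct $2\times2$ determinant computation. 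Types III and IV violate the DEC and are excluded. If this case analysis becomes messy, the fallback is a direct argument: the quadratic form on $\Lambda^+$ equals $T(u,u)$-type contractions of null vectors built from $v$. Concretely, a rank-one $v$ with $v\cdot v=0$ corresponds to a null $\ell$ with $v^*\hat Tv\propto T(\ell,\ell)$. A general $v$ splits into a positive combination of such terms plus a timelike contraction, and each piece is positive by the DEC.

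Given positive definiteness, existence follows. On a neighbourhood of $x$, take a smooth frame of $\Lambda^+$ and write $\hat T=H$ with $H$ a smooth positive definite $3\times3$ Hermitian matrix. Its smooth square root $H^{1/2}$ exists, for example via the Cholesky factor $H=LL^*$, which is smooth when $H>0$. Identify $\mathfrak{sl}_2(\mathbb C)$ with $\mathbb C^3$ using a basis $X_1,X_2,X_3$ of $\mathfrak{su}(2)$, normalized so that $-\operatorname{tr}(X_aX_b)=\delta_{ab}$; then $\phi(X_a)=X_a$. Set $\omega=\sum_a\big(\sum_i L_{ia}\Sigma_i\big)\otimes X_a$. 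This gives
\[
-\operatorname{tr}(\omega\otimes\bar\omega)=\sum_{i,j}(LL^*)_{ij}\,\Sigma_i\otimes\bar\Sigma_j=\hat T .
\]
Thus $\omega$ is a self-dual $\mathfrak{sl}_2(\mathbb C)$-valued local 2-form, i.e.\ an internal square root of $T$.
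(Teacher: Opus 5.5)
\textbf{The type II step fails.} Your overall route matches the paper's: pass to a frame adapted to $T$, read off the eigenvalues of $\hat T$, then build the root. You add two correct pieces the paper leaves implicit. One is the observation that $\hat T$ transforms by congruence under $\mathrm{SO}(3,\mathbb C)$, so positivity may be checked in any frame. The other is the Cholesky construction of $\omega$ with an orthonormal basis of $\mathfrak{su}(2)$, which fills in the paper's ``hence''.

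The problem is your claim that type II is ``handled by a direct $2\times2$ determinant computation''. For a traceless type II tensor, $\hat T$ is never positive definite. Take the canonical frame with null eigenvector $k=e_0+e_1$. Tracelessness forces the form below, and $\hat T$ splits as $2\mu$ on $\Sigma_1$ plus a block on $\Sigma_2,\Sigma_3$:
\[
T=\operatorname{diag}(\mu,-\mu,\mu+\delta,\mu-\delta)+f\,k^\flat\otimes k^\flat,
\qquad
\hat T\;\propto\;(2\mu)\oplus\begin{pmatrix} f-\delta & \pm i f\\ \mp i f & f+\delta\end{pmatrix}.
\]
The block has determinant $-\delta^2\le 0$. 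The DEC even forces $\delta=0$, so $\hat T$ has a kernel: $T$ is the sum of a non-null and a null Maxwell tensor.

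So type II must be \emph{excluded by the hypothesis}, not computed. It is excluded if the strict DEC quantifies over all future causal test vectors. Then the null eigenvector gives $T(k,k)=\lambda\,g(k,k)=0$, and $-T(k,\cdot)^\sharp=-\lambda k$ is null, both contradicting strictness. It is \emph{not} excluded if, as in your formulation, only timelike $u$ are tested. With $\delta=0$ and $\mu,f>0$, one has $T(u,u)>0$ and $-T(u,\cdot)^\sharp$ future timelike for every future timelike $u$, yet $\hat T$ is degenerate.

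You therefore need to fix the causal (interior-of-the-cone) version of the strict DEC. Then observe that only type I with $\rho>|p_i|$ survives. This is exactly what the paper's proof takes for granted when it starts from a timelike eigenvector.

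\textbf{Sign error in the Clebsch--Gordan formula.} Your formula has the wrong sign on $T_{ij}$. The equivariant identification is unique up to a real scalar, by Schur, since the representation is absolutely irreducible. It is
\[
\hat T_{i\bar j}\propto T_{00}\delta_{ij}-T_{ij}\pm i\epsilon_{ijk}T_{0k}.
\]
Your own Maxwell check detects this. For a purely electric field $E=e_1$, your formula gives $\operatorname{diag}(0,1,1)$ instead of $\phi\bar\phi^{T}=\operatorname{diag}(1,0,0)$, in your notation. The type I eigenvalues are therefore $\rho-p_i$, as in the paper, not $\rho+p_i$. This does not affect the type I conclusion, since $|p_i|<\rho$ under the strict DEC. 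It is needed, however, for the type II computation above to come out right.

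\textbf{The fallback is not yet an argument.} With the correct formula, a null $v\in\Lambda^+$ does give $v^*\hat Tv=\tfrac12T(\ell,\ell)$. But these null pieces encode only the null energy condition, which does not imply $\hat T>0$. For example, $\rho=1-\epsilon/2$ and $p=(-\epsilon/2,-\epsilon/2,1+\epsilon/2)$ satisfy the strict NEC but give $\hat T\propto\operatorname{diag}(1,1,-\epsilon)$. The ``timelike contraction'' would have to carry the entire DEC content, and your sketch does not say how it does so.
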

\begin{proof}
Denote by $-\rho$ the eigenvalue of ${T}$ associated with its timelike eigenvector, and denote by $p_1,p_2,p_3$ the eigenvalues of ${T}$ associated with its spacelike eigenvectors, representing the principal pressures. Since $\operatorname{tr}_g(T)=0$, we have $\rho=p_1+p_2+p_3$. By linear algebra, the eigenvalues of $2\hat{{T}}$ are precisely $\rho-p_1,\rho-p_2,\rho-p_3$, which are all positive by the energy condition.
\end{proof}

We are now in the position to state and prove our main theorem.

\begin{theorem}\label{Main}
For a traceless symmetric $(0,2)$-tensor ${T}$ satisfying the strict dominant energy condition on a 3+1 spacetime $(M,g)$, the following statements are equivalent: 
\begin{enumerate}
    \item [1.] the Rainich condition $d(\operatorname{ad}^{-1}_\omega(d\omega))+\omega+\bar{\omega}=\operatorname{ad}^{-1}_\omega(d\omega)\wedge\operatorname{ad}^{-1}_\omega(d\omega)$ holds for some internal square root $\omega$ of ${T}$;
    \item [2.] locally there exists an $\operatorname{SU}(2)$ Yang-Mills field on $(M,g)$ of which energy-momentum tensor is precisely equal to ${T}$,
\end{enumerate}
where $\operatorname{ad}_\omega\colon\Omega^1(M;\mathfrak{sl}_2(\mathbb{C}))\rightarrow\Omega^3(M;\mathfrak{sl}_2(\mathbb{C}))$ is the adjoint action $\operatorname{ad}_\omega=[-,\omega]$ of $\omega$. 

Moreover, when an internal square root $\omega$ of ${T}$ satisfies the Rainich condition, the 2-form $\omega+\bar{\omega}$ is then an $\operatorname{SU}(2)$ Yang-Mills field on $(M,g)$ of which energy-momentum tensor equals ${T}$.
\end{theorem}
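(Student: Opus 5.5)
The plan is to identify the Rainich condition with the statement that $\omega$ is the self-dual part of the curvature of an $\operatorname{SU}(2)$ connection solving the Yang-Mills equations, with that connection recovered algebraically from $\omega$ as $A=-\operatorname{ad}^{-1}_\omega(d\omega)$. I take the conventions $F=dA+A\wedge A$ and $d_A\eta=d\eta+[A,\eta]$ for Lie-algebra-valued forms.

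\emph{Algebraic preliminaries.} First I will show that $\operatorname{ad}_\omega\colon\Omega^1(M;\mathfrak{sl}_2(\mathbb{C}))\to\Omega^3(M;\mathfrak{sl}_2(\mathbb{C}))$ is invertible whenever $\omega$ is an internal square root of $T$. Both bundles have complex rank $12$, so injectivity suffices. Write $\omega=\sum_a\Sigma^a\otimes X_a$ with $\Sigma^a$ a basis of $\Lambda^+$, so that $\omega$ is a $3\times 3$ matrix $M$ relating $\Lambda^+$ to $\mathfrak{sl}_2(\mathbb{C})$. The identity $\hat T=-\operatorname{tr}(\omega\otimes\bar\omega)$ says $\hat T$ is, up to a positive constant coming from the Killing form, $M^{*}M$. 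By Lemma \ref{Energy_condition} $\hat T$ is positive definite, so $M$ is invertible. Injectivity then reduces, after a pointwise frame change, to the standard fact that $[A,\Sigma^a]$ with $\{\Sigma^a\}$ a nondegenerate self-dual triad determines $A$; this is the Plebański/Capovilla--Dell--Jacobson algebra. I will check it in a null tetrad by writing out the $12\times 12$ linear system.

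Second, I record the Lorentzian facts that complex conjugation maps $\Lambda^+$ to $\Lambda^-$, with $*\omega=i\omega$ and $*\bar\omega=-i\bar\omega$. For a real $\mathfrak{su}(2)$-valued $F$, the self-dual part $F^+$ then satisfies $F=F^++\overline{F^+}$, and the Yang-Mills stress-energy tensor of $F$ corresponds under the Clebsch--Gordan identification to $-\operatorname{tr}(F^+\otimes\overline{F^+})$, with the normalisation fixed as in the Definition. Hence $F$ has stress-energy tensor $T$ if and only if $F^+$ is an internal square root of $T$.

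\emph{(2)$\Rightarrow$(1).} Let $F=dA+A\wedge A$ be a Yang-Mills field with stress-energy tensor $T$, and set $\omega=F^+$, which is an internal square root by the above. The Bianchi identity $d_AF=0$ together with the Yang-Mills equation $d_A{*F}=i\,d_A(\omega-\bar\omega)=0$ gives $d_A\omega=0$, that is $\operatorname{ad}_\omega(A)=[A,\omega]=-d\omega$. By invertibility, $A=-\operatorname{ad}_\omega^{-1}(d\omega)=:-B$. Substituting into $\omega+\bar\omega=F=-dB+B\wedge B$ gives exactly the Rainich condition.

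\emph{(1)$\Rightarrow$(2), and the final claim.} Given $\omega$ satisfying the Rainich condition, set $B=\operatorname{ad}_\omega^{-1}(d\omega)$ and $A=-B$. Then $F_A=dA+A\wedge A=\omega+\bar\omega$ and $d_A\omega=0$ by construction. The key remaining point, which I expect to be the main obstacle, is showing that $A$ is $\mathfrak{su}(2)$-valued, i.e.\ $\bar A=A$. I would argue as follows.
\begin{itemize}
\item The Bianchi identity $d_AF_A=0$ and $d_A\omega=0$ give $d_A\bar\omega=0$.
\item Conjugating $d_A\omega=0$ gives $d_{\bar A}\bar\omega=0$.
\item Subtracting, $\operatorname{ad}_{\bar\omega}(A-\bar A)=0$.
\end{itemize}
Since $\bar\omega$ is the conjugate of a nondegenerate triad, $\operatorname{ad}_{\bar\omega}$ is injective by the anti-self-dual version of the first step, so $A=\bar A$. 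Thus $A$ is an $\operatorname{SU}(2)$ connection and $F=\omega+\bar\omega$ is real. Finally, $d_A{*F}=i(d_A\omega-d_A\bar\omega)=0$, so $F$ is Yang-Mills, and its stress-energy tensor is $T$ by the second step.

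Care is needed with signs and normalisation constants in the trace pairing. I also expect the conjugation $\phi$ to commute with brackets and wedges in the way this reality argument uses.
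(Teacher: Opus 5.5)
Your proposal is correct and takes essentially the same route as the paper. You define $A=-\operatorname{ad}_\omega^{-1}(d\omega)$ and read the Rainich condition as $F_A=\omega+\bar\omega$ with $D_A\omega=0$. You then use the Bianchi identity and conjugation to force $A=\bar A$, obtain the Yang--Mills equation from $\star\omega=i\omega$ and $\star\bar\omega=-i\bar\omega$, and run the argument backwards from $\omega=F^+$ for the converse.

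The one step to tighten is the invertibility of $\operatorname{ad}_\omega$, which the paper also justifies in only one line. Frame changes that preserve the bracket only bring the coefficient matrix $M$ to the form $O_1MO_2$ with $O_1,O_2\in\operatorname{SO}(3,\mathbb{C})$. So your explicit $12\times 12$ check must cover an arbitrary invertible $M$, not just an orthonormal triad. One way is to treat diagonal $M$ and then use a density argument; the determinant comes out proportional to $(\det M)^4$.
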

\begin{proof}
We first prove that statement (1) implies statement (2).

Let $\omega$ be an internal square root of $T$ satisfying the Rainich condition. 
Because $T$ satisfies the energy condition, the Hermitian form $\hat{T}$ is positive definite by Lemma~\ref{Energy_condition}, which implies that the components of $\omega$ span the bundle $\Lambda^+$. Notice that the center of the simple Lie algebra $\mathfrak{sl}_2(\mathbb{C})$ is trivial. We therefore conclude that the adjoint action $\operatorname{ad}_\omega \colon \Omega^1(M; \mathfrak{sl}_2(\mathbb{C})) \rightarrow \Omega^3(M; \mathfrak{sl}_2(\mathbb{C}))$ is an isomorphism.
We may therefore define a connection 1-form $A := -\operatorname{ad}^{-1}_\omega(d\omega)$.

By definition, this yields $d\omega + [A, \omega] = 0$, which is equivalent to $D_A\omega = 0$, where $D_A$ denotes the gauge covariant derivative of the connection $A$. 
Substituting $A$ into the Rainich condition immediately gives 
$dA + A \wedge A = \omega + \bar{\omega}$. Therefore, the curvature $F_A := dA + A \wedge A$ of the connection $A$ evaluates to $F_A = \omega + \bar{\omega}$. 

By the Bianchi identity, we have $D_A F_A = 0$, which yields $D_A(\omega + \bar{\omega}) = 0$. 
Since $D_A\omega = 0$, it follows that $D_A\bar{\omega} = 0$. 
Taking the complex conjugate of $D_A\omega = 0$ yields $D_{\bar{A}}\bar{\omega} = 0$. 
Rearranging then gives $[\omega,A - \bar{A}] = 0$. Since $\operatorname{ad}_\omega$ is an isomorphism, the condition $[\omega,A - \bar{A}] = 0$ implies that $A = \bar{A}$. 
Thus, the connection 1-form $A$ takes values in the compact real form $\mathfrak{su}(2)$.

We now verify the Yang-Mills equations for the $\operatorname{SU}(2)$-connection $A$. Evaluating the covariant derivative of the Hodge star of the curvature gives $D_A \star F_A = D_A \star(\omega + \bar{\omega})$, where $\star$ is the Hodge star operator on $(M,g)$. 
Since $\omega$ is self-dual and $\bar{\omega}$ is anti-self-dual, we have $\star\omega = i\omega$ and $\star\bar{\omega} = -i\bar{\omega}$, which yields 
$$ D_A \star F_A = D_A(i\omega - i\bar{\omega}) = iD_A\omega - iD_A\bar{\omega} = 0-0 = 0. $$
Thus, the 1-form $A$ is an $\operatorname{SU}(2)$ Yang-Mills potential. Under the canonical identification via the Clebsch-Gordan decomposition, the energy-momentum tensor of a gauge field corresponds to $-\operatorname{tr}(F_A^+ \otimes F_A^-)$. Since $F_A^+ = \omega$ and $F_A^- = \bar{\omega}$, this rearranges to $\operatorname{tr}(\omega \otimes \bar{\omega}) = -\hat{T}$. Therefore, the energy-momentum tensor of $A$ is precisely $T$.

We now proceed to prove that statement (2) implies statement (1).

Assume there exists a local $\operatorname{SU}(2)$ Yang-Mills field on $(M,g)$ with potential $A$ and Faraday tensor $F$, of which energy-momentum tensor is $T$.
Then, we have that $$\omega := \frac{1}{2}(F - i\star F)$$ is an $\mathfrak{sl}_2(\mathbb{C})$-valued self-dual 2-form. 
The energy-momentum tensor condition then yields that  $\operatorname{tr}(\omega \otimes \bar{\omega}) = -\hat{T}$, securing that $\omega$ is an internal square root of $T$. 

Because $A$ is a Yang-Mills potential 1-form, its curvature satisfies the Bianchi identity $D_A F = 0$ and the Yang-Mills equation $D_A \star F = 0$, which implies $d\omega + [A, \omega] = D_A \omega = 0$ by linearity. 
As established earlier, the adjoint action $\operatorname{ad}_\omega$ is invertible, we can solve for the connection $A = -\operatorname{ad}_\omega^{-1}(d\omega)$. Since $F$ is an $\mathfrak{su}(2)$-valued 2-form and hence is in particular real, we have $F \equiv dA + A \wedge A = \omega + \bar{\omega}$. 
Substituting $A = -\operatorname{ad}_\omega^{-1}(d\omega)$ into the curvature equation of $A$ provides the Rainich condition 
$$ d(\operatorname{ad}^{-1}_\omega(d\omega))+\omega+\bar{\omega}=\operatorname{ad}^{-1}_\omega(d\omega)\wedge\operatorname{ad}^{-1}_\omega(d\omega). $$

The proof is therefore completed.
\end{proof}

\section{Concluding Remarks}

Theorem~\ref{Main} illustrates a rigidity phenomenon inherent to non-Abelian gauge theories. In the classical $\operatorname{U}(1)$ Rainich problem, the algebraic constraints of gravity leave a continuous duality phase undetermined, which must be fixed subsequently by the differential Maxwell equations. In contrast, the $\operatorname{SU}(2)$ theory developed here is algebraically rigid: The requirement that the internal square root $\omega$ satisfies $\hat{T} = -\operatorname{tr}(\omega \otimes \bar{\omega})$ initially suggests a larger $\operatorname{U}(3)$ gauge freedom. However, the implicit requirement that the components of $\omega$ satisfy the Lie algebra relations of $\mathfrak{sl}_2(\mathbb{C})$ algebraically shatters this unitary symmetry, collapsing the freedom exactly down to the physical $\operatorname{SU}(2)$ gauge group.

Beyond its structural rigidity, the Rainich condition established here offers a computable criterion for physicists. Traditionally, determining whether a given symmetric bilinear form is a Yang-Mills stress–energy tensor requires solving a nonlinear system of PDEs for an unknown connection. Theorem~\ref{Main} circumvents this integration process. Once a target tensor $T$ is decomposed to extract an internal square root $\omega$, verifying its Yang-Mills compatibility is reduced to a forward calculation. The criterion relies on algorithmic evaluations such as differentiation $d\omega$ and matrix inversion $\operatorname{ad}_\omega^{-1}$. This translates a notoriously difficult inverse problem into a direct, computationally efficient verification process on the algebra of forms.


\section*{Acknowledgement}

The author is deeply grateful to Alexander Veselov and Evgeny Ferapontov for inspiring discussions. Special thanks should also go to Cheng He and Zongjian Han for careful proofreading.

\section*{Statements and Declarations}

No funding was received to assist with the preparation of this manuscript. The author certifies that the author has no affiliations with or involvement in any other organization or entity with any financial interest or non-financial interest in the subject matter or materials discussed in this manuscript. Data sharing is not applicable to this article as no datasets were generated or analysed during the current study.

\bibliographystyle{plain}
\bibliography{references}

\end{document}